\documentclass[12pt,reqno]{article}

\usepackage[usenames]{color}
\usepackage{amssymb}
\usepackage{bbm}
\usepackage{graphicx}
\usepackage{amscd}

\usepackage[colorlinks=true,
linkcolor=webgreen,
filecolor=webbrown,
citecolor=webgreen]{hyperref}

\definecolor{webgreen}{rgb}{0,.5,0}
\definecolor{webbrown}{rgb}{.6,0,0}

\usepackage{color}
\usepackage{fullpage}
\usepackage{float}
\usepackage{caption}

\usepackage{graphics,amsmath,amssymb}
\usepackage{amsthm}
\usepackage{amsfonts}
\usepackage{latexsym}
\usepackage{epsf}

\setlength{\textwidth}{6.5in}
\setlength{\oddsidemargin}{.1in}
\setlength{\evensidemargin}{.1in}
\setlength{\topmargin}{-.5in}
\setlength{\textheight}{9.2in}

\makeatletter

\theoremstyle{plain}
\newtheorem{theorem}{Theorem}

\newtheorem{corollary}[theorem]{Corollary}
\newtheorem{remark}[theorem]{Remark}

\theoremstyle{definition}
\newtheorem{definition}[theorem]{Definition}

\newtheorem{Theorem}{Theorem}[section]
\theoremstyle{definition}

\newtheorem{prop}[Theorem]{Proposition}

\makeatother

\begin{document}

\begin{center}
\vskip 1cm{\LARGE\bf
A bijection between triangulations and $312$-avoiding permutations}\\
%\vskip 1cm
\large
Alon Regev\\
Department of Mathematical Sciences\\
Northern Illinois University\\
DeKalb, IL\\
%\href{mailto:regev@math.niu.edu}{\tt regev@math.niu.edu} \\
\end{center}

%\today

\section{Introduction}

A permutation $a_1a_2\ldots a_n$ is {\em $312$-avoiding} if for $i<j<k$ we never have $a_j<a_k<a_i$. It is well known that the sets $S_n(312)$ of total number of $312$-avoiding permutations of $\{1,\ldots,n\}$ are eumerated by the Catalan numbers $\displaystyle C_n={1\over n+1}{2n\choose n}$. Equivalently, this is the number of permutations in $S_n(132)$, the $132$-avoiding permutations in $S_n$. Several bijective proofs that $|S_n(132)|=C_n$ are presented in the paper of Claesson and Kitaev \cite{CK}, which provides a survey and analysis of bijections between $132$-avoiding permutations and $123$-avoiding permutations.

In this paper we give a bijection between the set of triangulations of an $n$-gon and the $312$-avoiding permutations in $S_{n-2}$, using what we call the {\em clip sequence} of a triangulation. We also use this concept to find a bijection between all dissections of a polygon and certain classes of $312$-avoiding permutations, including alternating $312$-avoiding permutations and their generalizations.

\section{Clip sequences}

 A triangulation of a convex $n$-gon may be considered as a graph with $n$ vertices labeled by positive integers.
The
{\em %(smallest ear)
clip sequence} of a triangulation is defined as the output of the following procedure: at each step, delete (and record) the vertex of degree $2$ having the smallest label, and its two incident edges (see Figures \ref{1234} - \ref{132564}). It is easy to see that the result is a permutation  of $\{1,\ldots,n-2\}$. %The purpose of this is to show that in fact we have the following.

\begin{theorem}\label{312clip}
The clip sequences of triangulations of $n$-gons are precisely the $312$-avoiding permutations of $\{1,2,\ldots, n-2\}$. The map taking every triangulation to its clip-sequence is a bijection between the triangulations of an $n$-gon and $S_{n-2}(312)$.
\end{theorem}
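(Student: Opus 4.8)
The plan is to show that the clip map is a well-defined injection into $S_{n-2}(312)$; since both the triangulations of an $n$-gon and the set $S_{n-2}(312)$ are counted by $C_{n-2}$, injectivity together with the correct codomain forces the map to be a bijection. Throughout I will use the basic recursion that, if $a_1$ is the first clipped vertex (the ear of smallest label) and $T'$ is the triangulation of the $(n-1)$-gon obtained from $T$ by deleting $a_1$, then the clip sequence of $T$ is $a_1$ followed by the clip sequence of $T'$. This lets me argue by induction on $n$.

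First I would verify that the clip sequence is a permutation of $\{1,\dots,n-2\}$, i.e. that the two surviving vertices are always $n-1$ and $n$. The key geometric fact is that in any triangulated polygon with at least four vertices, no two ears are adjacent: if two adjacent vertices both had degree $2$, their two ear-triangles would coincide and the polygon would be a single triangle. Since $n-1$ and $n$ are adjacent on the boundary and remain so throughout the clipping, whenever $n-1$ is an ear of a sub-polygon with at least four vertices the second guaranteed ear cannot be $n$, hence has a smaller label; so $n-1$ is never the smallest ear. The same reasoning (with the fact that the largest present label is never the unique smallest ear, and is never the smallest vertex of a triangle) shows $n$ is never deleted either. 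Thus exactly the labels $1,\dots,n-2$ are recorded.

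The heart of the proof is that every clip sequence avoids $312$, and here I would strengthen the inductive hypothesis to assert, for every triangulation, both that (i) the clip sequence avoids $312$, and that (ii) the labels strictly smaller than the first clipped label $a_1$ occur in the clip sequence in strictly decreasing order. The observation driving the induction is that clipping an ear only decreases the degrees of its two current neighbors. Since $a_1=k$ is the smallest ear, every label in $\{1,\dots,k-1\}$ is a non-ear, and after deleting $k$ (whose neighbors are $k-1$ and $k+1$) the labels $1,\dots,k-2$ are still non-ears of $T'$. Hence the next clipped label $a_2$ is either $k-1$ or some label exceeding $k$, but never one of $1,\dots,k-2$. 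In either case, applying hypothesis (ii) to $T'$ forces the labels below $k$ to come out in decreasing order, which is exactly statement (ii) for $T$. Statement (i) then follows because any $312$ pattern of the clip sequence either lies entirely inside the clip sequence of $T'$ (excluded by induction) or uses $a_1$ as its leading, largest entry, which would require two later entries below $a_1$ to appear in increasing order — impossible by (ii).

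Finally I would prove injectivity, again by induction via the recursion. Deleting the ear $a_1$ creates the boundary edge joining the two neighbors of $a_1$; conversely, given $T'$ together with the value $a_1$, there is a unique way to reinsert $a_1$, namely as an ear glued onto the edge connecting the predecessor and successor of $a_1$ among the surviving labels. Thus $T$ is recovered uniquely from the pair $(a_1,T')$, and since the clip sequence of $T$ determines $a_1$ and the clip sequence of $T'$, distinct triangulations have distinct clip sequences. Combining the three parts, the clip map is an injection from the $C_{n-2}$ triangulations into $S_{n-2}(312)$, a set of the same cardinality, and is therefore the desired bijection. I expect the main obstacle to be the $312$-avoidance step: the real work is isolating the strengthened hypothesis (ii) and verifying the ``only the two neighbors lose degree'' observation carefully enough to pin down the possible values of $a_2$.
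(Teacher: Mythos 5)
Your proof is correct, but it takes a genuinely different route from the paper's. The paper recurses on the \emph{last} clipped entry $k=a_{n-2}$: the triangle with vertices $k$, $n-1$, $n$ splits the polygon into two smaller polygons, the clip sequence decomposes as $a_1\dots a_{k-1}b_{k+1}\dots b_{n-3}k$, and the structural characterization of $312$-avoiding permutations (Proposition \ref{property}) makes this single decomposition work in both directions at once; in particular, surjectivity is proved constructively, by gluing the two inductively obtained triangulations onto the triangle $(n-1,n,k)$, with no counting required. You instead recurse on the \emph{first} clipped entry $a_1$: you prove $312$-avoidance through the strengthened invariant that labels below $a_1$ are clipped in decreasing order (justified by the observation that clipping an ear lowers only its two neighbors' degrees), prove injectivity by the unique reinsertion of the clipped ear, and then get surjectivity by cardinality, invoking the classical facts that triangulations of an $n$-gon and $S_{n-2}(312)$ are both counted by $C_{n-2}$. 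The trade-offs: the paper's argument is self-contained as a bijective proof---it presupposes neither enumeration, so the theorem can serve as an independent proof that the two sets are equinumerous---whereas your argument could not be used to re-derive either Catalan count without circularity; on the other hand, you avoid Proposition \ref{property} entirely, you make injectivity explicit (the paper leaves it implicit in its inverse construction), and your invariant (ii) is a nice structural property of clip sequences in its own right. One point to tidy in a careful write-up (which applies equally to the paper's own proof): since $T'$ carries labels $\{1,\dots,n\}\setminus\{a_1\}$ rather than $\{1,\dots,n-1\}$, the induction hypothesis should be stated for polygons with an arbitrary increasing label set, or one should note that clipping commutes with order-isomorphic relabeling.
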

The proof uses the following property of $312$-avoiding permutations.
\begin{prop}\label{property}
 A permutation of $\{1,\ldots,m\}$ is $312$-avoiding if and only if it has the form $a_1\ldots a_{k-1}b_{k+1}\ldots b_{m-1}k$, where $\{a_i\}$ and $\{b_i\}$ are (possibly empty) $312$-avoiding permutations of $\{1,\ldots, k-1\}$ and of $\{k+1,\ldots, m-1\}$, respectively.
\end{prop}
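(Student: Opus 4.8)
The plan is to prove both implications by pivoting on the final letter $k$ of the permutation. Writing the permutation as $\pi=\pi_1\cdots\pi_N$ with $\pi_N=k$, I want to show that $312$-avoidance is \emph{equivalent} to the conjunction of two conditions: (i) every letter smaller than $k$ occurs in a position to the left of every letter larger than $k$; and (ii) the block of small letters and the block of large letters are each $312$-avoiding. Once (i) and (ii) are established, the asserted form is immediate, since (i) says the prefix $\pi_1\cdots\pi_{N-1}$ splits as $[\text{small letters}][\text{large letters}]$, i.e.\ as $a_1\cdots a_{k-1}$ followed by the $b$-block, with $k$ trailing at the end.

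For the forward direction I would assume $\pi$ is $312$-avoiding and argue by contradiction. Suppose some letter $x>k$ sits to the left of some letter $y<k$. Reading the three positions of $x$, then $y$, then the terminal $k$, I obtain values $x>k>y$, that is, the largest value first, the smallest second, and the middle value last. This is exactly a $312$ pattern, contradicting the hypothesis. Hence no large letter precedes a small letter, which is condition (i). Condition (ii) is then automatic, because any $312$ pattern confined to the small block (or to the large block) would be a $312$ pattern of $\pi$ itself.

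For the reverse direction I would assume $\pi=A\,B\,k$, where $A$ is a $312$-avoiding permutation of $\{1,\dots,k-1\}$ and $B$ is a $312$-avoiding permutation of the large values, and suppose toward a contradiction that $\pi$ contains a $312$ pattern at positions $i<j<l$, with $\pi_i$ largest, $\pi_j$ smallest, and $\pi_l$ the middle value. The key is a short case analysis on where $l$ falls, using that $A$ and $B$ occupy \emph{contiguous} ranges of positions. If $\pi_l=k$ (the terminal position), then $\pi_j<k<\pi_i$ forces $\pi_j\in A$ and $\pi_i\in B$, so $j$ precedes $i$ in position, contradicting $i<j$. If $\pi_l$ lies in $A$, then $\pi_j<\pi_l<k$ places $\pi_j$ in $A$ as well, and contiguity of $A$'s positions (with $i<j<l$) forces $\pi_i\in A$ too, contradicting that $A$ avoids $312$. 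Symmetrically, if $\pi_l$ lies in $B$, then $\pi_i>\pi_l>k$ places $\pi_i\in B$, and contiguity forces all three indices into $B$, contradicting that $B$ avoids $312$. Every case yields a contradiction, so $\pi$ is $312$-avoiding.

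I expect the reverse direction to be the only delicate point: the \emph{mixed} cases, in which the three indices of a hypothetical $312$ pattern straddle the two blocks or involve the terminal $k$, are where one must combine the value constraints ($\pi_i$ largest, $\pi_j$ smallest) with the positional fact that the small block precedes the large block. The forward direction and the within-block inheritance in (ii) are essentially immediate. I would also note the degenerate cases in which $A$ or $B$ is empty, but these collapse to the same reasoning and require no separate argument.
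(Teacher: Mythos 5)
Your proof is correct. One thing to note up front: the paper itself gives \emph{no} proof of this proposition --- it is stated bare and then invoked inside the proof of Theorem \ref{312clip} --- so there is no argument of the author's to compare yours against; your write-up supplies exactly the justification the paper omits. Both directions of your argument are sound: the forward direction correctly identifies that a large value preceding a small value, together with the terminal $k$, is literally a $312$ pattern (values $\pi_j<k<\pi_i$ read in positions $i<j<m$), and the reverse direction's case analysis on the position of the ``$2$'' of a hypothetical pattern (in $A$, in $B$, or equal to the terminal $k$) is exhaustive and each case closes correctly using the contiguity of the two blocks. A minor point in your favor: your formulation, with $B$ a $312$-avoiding permutation of \emph{all} the values exceeding $k$, i.e.\ of $\{k+1,\ldots,m\}$, quietly corrects an off-by-one slip in the paper's statement, which says $\{b_i\}$ permutes $\{k+1,\ldots,m-1\}$ --- under that reading the value $m$ never appears and the word has only $m-1$ letters, so the intended reading is clearly yours.
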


\begin{proof}[{\bf Proof of Theorem \ref{312clip}}:]
The proof is by induction on $n$, with the case $n=2$ corresponding to the empty clip sequence. Given a clip sequence $a_1a_2\ldots a_{n-2}$ of a triangulation, let $k=a_{n-2}$, and note that $k$ is the triangle with vertices $k,n-1,n$ is one of the triangles of the triangulation. This defines a triangulation of the polygon with vertices $1,\ldots, k-1,k,n$ and a triangulation of the polygon with vertices $k,k+1,\ldots, n-1,n$. We claim that $a_1\ldots a_{k-1}$ is a $312$-avoiding permutation of $\{1,\ldots, k-1\}$. Indeed, since every triangulation has at least two ears, at each of the first $k-1$ iterations of the clipping process, there will be an ear numbered $a_i\in \{1,\ldots,k-1\}$ in the triangulation of the $n$-gon. Since those ears have a smaller label than any of the other ears of the triangulation of the $n$-gon, they are the ones that will be clipped, until they are all removed. Once these ears are removed, the triangulation remaining has vertices $k,k+1,\ldots, n+2$, and so the next ears clipped are the ones numbered $b_i\in \{k+1,\ldots, n-2\}$. By induction, both sequences $\{a_i\}$ and $\{b_i\}$ are $312$-avoiding, and so by Proposition \ref{property} the result is a $312$-avoiding permutation.

Conversely, by Proposition \ref{property} each $312$-avoiding permutation of $n-2$ has the form $a_1\ldots a_{k-1}b_{k+1}\ldots b_{n-3}k$, as above. By induction, the sequences $\{a_i\}$ and $\{b_i\}$ correspond to clip sequences of triangulations. Attaching these triangulations on each side of the triangle with vertices $n-1,n,k$ then gives a triangulation whose clip sequence is the given permutation.
\end{proof}

\begin{figure}[h]
\begin{center}
\epsfxsize=6.7in
\leavevmode\epsffile{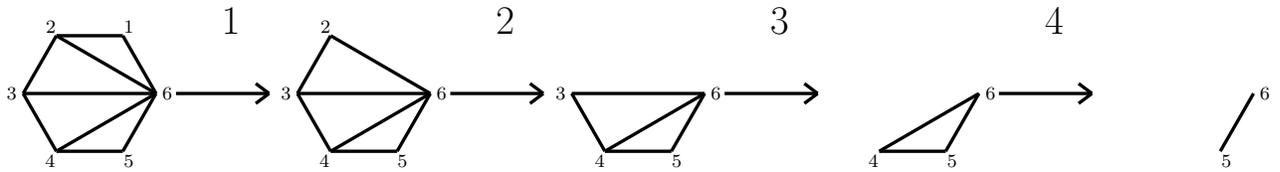}
\end{center}
\caption{Clip sequence corresponding to the permutation $1234$}\label{1234}
\end{figure}

\begin{figure}[h]
\begin{center}
\epsfxsize=6.7in
\leavevmode\epsffile{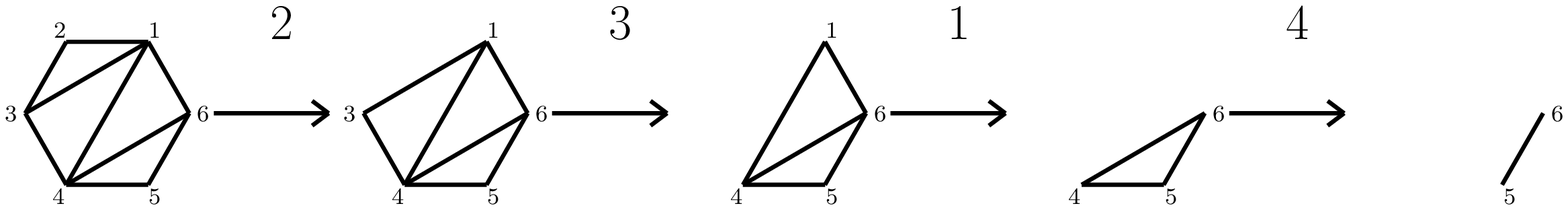}
\end{center}
\caption{Clip sequence corresponding to the permutation $2314$}\label{2314}
\end{figure}

%\begin{figure}[h]
%\begin{center}
%\epsfxsize=6.7in
%\leavevmode\epsffile{1342.eps}
%\end{center}
%\caption{Clip sequence corresponding to the permutation $1342$}\label{1342}
%\end{figure}

\begin{figure}[H]
\begin{center}
\epsfxsize=6.7in
\leavevmode\epsffile{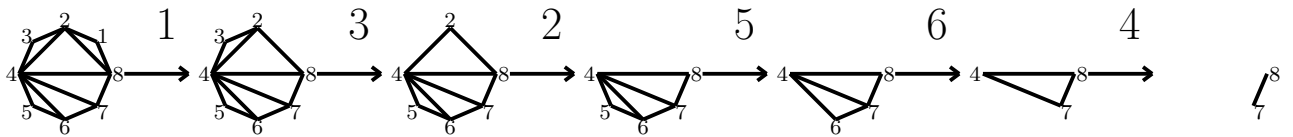}
\end{center}
\caption{Clip sequence corresponding to the permutation $132564$}\label{132564}
\end{figure}

\begin{remark}
The bijection given here can also be described as follows. Each triangulation of an $n$-gon can be associated with a binary tree via a standard bijection (see, e.g. \cite[Corollary 6.2.3]{S}), and the reference edge chosen to be $(n-1)n$ . Reading the entries of the resulting binary tree in post-order then gives the clip sequence of the triangulation.
\end{remark}

\section{Polygon dissections and decent permutations}

A {\em dissection} of an $n$-gon is a partition of the polygon by at most $n-3$ diagonals. In this section we give a bijection between the dissections of a polygon and a class of $312$-avoiding permutations.
%When restricted to triangulations, this bijection gives alternating permutations, recovering a result of Lewis. This result is then generalized, by considering
%$k$-angulations and $k$-dissections.
%The class of $312$-avoiding permutations that is involved is defined as follows.
\begin{definition}
The {\em up/down pattern} of a permutation $a_1\ldots a_n$ is the word $x_1x_2\ldots x_{n-1}$, where $x_i=D$ if $a_i>a_{i+1}$ and $x_i=U$ if $a_i<a_{i+1}$.
A permutation is {\em decent} if its up/down pattern has the form $D^{i_1}U D^{i_2}U \cdots U D^{i_k}$, with all $i_t\ge 1$.
\end{definition}

Equivalently, a permutation is decent if its up/down pattern starts and ends with $D$  and avoids any occurrence of $UU$.
%For example, the permutation $532641$ is decent since its sequence is $DDUDD$, while the permutations $423651$ (whose sequence is $DUUDD$) and $465321$ (whose sequence is $UDDDD$) are not decent.

Consider the following procedure. Let $\pi=a_1\ldots a_n$ be a decent $312$-avoiding permutation. Denote
\begin{align*}
a_1 \ldots a_n= c_{11}c_{12}\ldots c_{1i_1}c_{21}  c_{22} \ldots  c_{2i_2} \ldots \ldots  c_{k1}c_{k2} \ldots c_n,
\end{align*}
where
\begin{align*}
c_{11}> c_{12}>\ldots >c_{1i_1}< c_{21} >c_{22}>\ldots >c_{2i_2} <\ldots \ldots <  c_{k1} >c_{k2}>\ldots > c_n.
\end{align*}
By Theorem \ref{312clip},  $\pi$ is a clip sequence of a triangulation. Consider the triangles which, in the clip sequence, were ears numbered $c_{12},c_{13}, \ldots , c_{1i_1}$. Since their numbers are decreasing, these $i_1$ triangles form an $(i_1+2)$-gon.
Similarly, each set of triangles $c_{r2},\ldots,c_{ri_r}$ forms an $(i_r+2)$-gon.
Each of the remaining triangles corresponding to ears numbered $c_{11},c_{21},\ldots, c_{k1}$ is either an ear or shares one side with the polygon (it cannot be an inner triangle since that would imply $c_{i1}$ is immediately preceded by a smaller number). Those triangles which share a side with the polygons, create ``gaps" in the polygon. These gaps are then ``closed", by identifying every two vertices that have become disconnected.
%The result is then a set of polygons, whose union is a convex polygon with possibly a number of triangular ``gaps". This set of triangles can be made into a dissection of a convex polygon by identifying any two consecutive vertices of positive degree which are not connected (``closing the gaps").
Finally, changing the vertex labels to $1,\ldots,n-k$ results in a dissection of the $(n-k)$-gon with vertices $1,\ldots , n-k$ (see Figures \ref{CS435261}-\ref{CS654873921}).

The procedure described above describes a map from the set of decent $312$-avoiding permutations of $n$ to the set of polygon dissections. This map is in fact a bijection, since each of the steps of the procedure is reversible. Thus we have the following theorem.

%Next, consider the map $\psi$ defined as follows. Given a dissection of an $n$-gon by $k-1$ diagonals into $k$ polygons, triangulate each polygon with vertices $v_1<v_2<\ldots %v_j$ by adding the diagonals $v_2v_j, v_3v_j,\ldots, v_{j-2}v_j$. Then add another vertex $u$ so that $v_{j-1}<u<v_j$, renumbering the vertices as necessary. Doing this for each p%olygon in the dissection, the result is a triangulation of an $(n+k)$-gon whose clip sequence is a decent $312$-avoiding permutations. In fact, we have the following result.

\begin{theorem}\label{Main}
The decent $312$-avoiding permutations of $\{1,\ldots, n\}$ having up/down pattern $D^{i_1}U D^{i_2}U \cdots U D^{i_k}$%, where all $i_t\ge 1$,
are in bijection with the dissections of an $(n-k+1)$-gon by $k$ diagonals.  %(into k-1 parts)
\end{theorem}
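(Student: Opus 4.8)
The plan is to prove the theorem by verifying that the explicit procedure described just before the statement is a well-defined map onto the claimed family of dissections and that it admits an inverse. Since Theorem \ref{312clip} already identifies the decent $312$-avoiding permutations of $\{1,\ldots,n\}$ with a distinguished family of triangulations of the $(n+2)$-gon (those whose clip sequence is decent), I would first transport the whole problem into the world of triangulations: it suffices to produce a bijection between these triangulations, graded by the composition $(i_1,\ldots,i_k)$ that records the maximal descending runs of the clip sequence, and the dissections named in the statement. Writing the permutation in the run form $c_{11}\ldots c_{1i_1}c_{21}\ldots$ as above, the key is to read off each structural feature of the pattern geometrically.

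The first substantive step is exactly this geometric dictionary. I would prove the two local facts the procedure rests on: first, that a descent $c_{r,s}>c_{r,s+1}$ in the clip sequence forces the two clipped triangles to be glued along a shared diagonal, so that a maximal descending run assembles into one strip of triangles filling a single sub-polygon — the $(i_r+2)$-gon of the procedure; and second, that the initial entry $c_{r1}$ of each run, being a left-to-right peak of the up/down pattern, corresponds to a triangle that is either an ear or shares exactly one edge with the boundary, and never an interior triangle. This second fact is precisely where $312$-avoidance is used. I would establish both statements by induction on $n$ using Proposition \ref{property}, whose "peak at the end together with two $312$-avoiding blocks" shape is already organized around exactly the decomposition I need.

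With the dictionary in place the forward map reads cleanly: delete the diagonals interior to each strip (the $r$-th strip, being an $(i_r+2)$-gon, contributes $i_r-1$ of them), thereby merging each run into one cell; then perform the gap-closing identifications forced by the boundary-sharing peak triangles; then relabel. The crux of this direction is an exact bookkeeping argument. I would run an Euler-type tally of vertices, edges, and cells through the deletion and identification steps, tracking the contribution of each of the $k$ runs, to confirm that what survives is a dissection of a polygon of exactly the claimed size by exactly the claimed number of diagonals. Along the way I must check that the operations are mutually compatible — no edge is simultaneously slated to be kept and deleted, no two vertex identifications conflict — and that the surviving diagonals remain pairwise non-crossing, so that the output is a genuine dissection rather than an illegal configuration.

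The final step is reversibility. Given a dissection, I would reconstruct the triangulation by re-triangulating each cell in the unique way whose clip order is decreasing, re-opening the closed gaps by re-inserting the peak triangles, and then reading off the clip sequence via Theorem \ref{312clip}; I would then verify that this is a two-sided inverse of the forward map. I expect the main obstacle to be the gap-closing step in both directions: proving that the vertices to be identified are unambiguously determined by the boundary-sharing peaks, and, conversely, that from the dissection alone one can recover precisely where the peak triangles must be re-inserted and which boundary edges must be split apart again. Pinning this identification down rigorously, hand in hand with the exact vertex-and-diagonal count of the previous step, is the heart of the argument; by contrast, the merging of descending runs and its inverse become routine once the local dictionary is established.
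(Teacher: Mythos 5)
Your proposal takes essentially the same approach as the paper: the paper's ``proof'' of Theorem \ref{Main} is exactly the run-decomposition procedure described before the statement (descending runs merge into sub-polygon cells, peak entries give ear or boundary-sharing triangles, gaps are closed and vertices relabeled), followed by the one-line assertion that each step is reversible. Your plan matches this construction step for step and simply proposes to carry out the verifications (the geometric dictionary, the vertex/diagonal count, and the explicit inverse) that the paper leaves implicit.
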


%Replacing $n$ by $2n$ and taking $k=n$, the result becomes that the set of $312$-avoiding permutations of $2n$ with pattern $DUDU\cdots UD$ is in bijection with the $n$-dissections of a $2n+2$-gon. %k=n descents, n-1 ascents, 2n-1 total, so 2n+2$; $
The following corollaries follow easily from Theorem \ref{Main}. 
%Note that we make implicit use of the bijection $a_1\ldots a_n\mapsto a_1\ldots a_n(n+1)$.
We first obtain a result of Lewis \cite{L}. Note that in this case the resulting bijection is essentially equivalent to the bijection given by Lewis, and may be considered as a visualization of his bijection.

\begin{corollary}\cite{L}
The alternating, $312$-avoiding permutations of $\{1,\ldots, 2n\}$ (i.e., those with up-down pattern $DUDU\ldots DUD$) are in bijection with the triangulations of an $(n+2)$-gon, and hence with the $312$-avoiding permutations of $\{1,\ldots, n\}$.
\end{corollary}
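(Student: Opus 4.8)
The plan is to obtain this corollary as the extreme case of Theorem \ref{Main} in which every descending block has minimal length, i.e. $i_1=\cdots=i_k=1$. First I would observe that the alternating condition, up/down pattern $DUDU\cdots DUD$, is exactly the decent pattern $D^{i_1}UD^{i_2}U\cdots UD^{i_k}$ with all $i_t=1$: such a pattern starts and ends with $D$ and contains no $UU$, so these permutations are decent, and the requirement that every maximal descending run consist of exactly two entries (a run with $i_t$ descents has $i_t+1$ entries) is precisely $i_t=1$ for all $t$. Thus the alternating $312$-avoiding permutations of $\{1,\dots,2n\}$ are identified with the distinguished subfamily of the decent $312$-avoiding permutations to which Theorem \ref{Main} applies.

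Next I would track what the dissection procedure of Section~3 does on this subfamily. In that construction the piece assembled from the $r$-th descending run is an $(i_r+2)$-gon; setting every $i_r=1$ forces each such piece to be a triangle, so the resulting dissection has only triangular faces and is therefore a \emph{triangulation}. Conversely a dissection is a triangulation exactly when all of its faces are triangles, i.e. when all $i_r=1$, so the bijection of Theorem \ref{Main} restricts to a bijection between the alternating $312$-avoiding permutations of $\{1,\dots,2n\}$ and the triangulations of the output polygon. It then remains to identify that polygon. A permutation of length $2n$ all of whose runs have length two has $k=n$ descending runs, and hence yields $n$ triangular faces joined along $n-1$ diagonals; since an $m$-gon triangulation has $m-2$ triangles and $m-3$ diagonals, a polygon with $n$ triangular faces is an $(n+2)$-gon. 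This gives the asserted bijection with triangulations of an $(n+2)$-gon.

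Finally, for the ``hence'' clause I would compose with the clip-sequence bijection of Theorem \ref{312clip}, under which the triangulations of an $(n+2)$-gon correspond to the $312$-avoiding permutations of $\{1,\dots,(n+2)-2\}=\{1,\dots,n\}$. The chain (alternating $312$-avoiders of $\{1,\dots,2n\}$) $\leftrightarrow$ (triangulations of an $(n+2)$-gon) $\leftrightarrow$ $S_n(312)$ then yields the statement. The substantive content is entirely carried by Theorems \ref{Main} and \ref{312clip}; the only point needing care is the parameter bookkeeping, namely confirming that the single condition $i_t=1$ simultaneously encodes the alternating pattern and the triangulation condition, and that the output polygon has exactly $n+2$ vertices. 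I expect this vertex count, equivalently the exact number of gaps closed during the procedure, to be the main thing to verify, since everything else is an immediate specialization of Theorem \ref{Main}.
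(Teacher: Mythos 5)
Your proof is correct, and it is the argument the paper intends: the paper gives no explicit proof of this corollary, saying only that it ``follows easily from Theorem \ref{Main},'' and your specialization to $i_1=\cdots=i_k=1$ followed by composition with the clip-sequence bijection of Theorem \ref{312clip} is exactly that specialization. One point deserves emphasis, and it works in your favor: your choice to derive the polygon size from the face count (each of the $k=n$ descending runs contributes one triangular face, and $n$ triangular faces force an $(n+2)$-gon with $n-1$ diagonals) rather than by substituting into the printed conclusion of Theorem \ref{Main} is not optional bookkeeping but necessary. Substituting a permutation length of $2n$ and $k=n$ into the theorem's stated conclusion, ``dissections of an $(n-k+1)$-gon by $k$ diagonals,'' would yield an $(n+1)$-gon dissected by $n$ diagonals, which is impossible (a dissection of an $m$-gon uses at most $m-3$ diagonals) and contradicts the corollary. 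The printed parameters of Theorem \ref{Main} carry off-by-one errors: the construction of Section~3 removes one triangle per descending run and so produces $k$ faces, hence $k-1$ diagonals in an $(n-k+2)$-gon, which is exactly the count you carried out, and it is your count, not the theorem's printed formula, that reproduces the corollary as stated.
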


\begin{corollary}%\cite{L}
The $312$-avoiding permutations 
of $\{1,\ldots, (j+1)m\}$ 
with up/down pattern  $D^jUD^jU\ldots UD^j$ are in bijection with the $(j+2)$-angulations of an $(mj+2)$-gon.
\end{corollary}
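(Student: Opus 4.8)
The plan is to derive this corollary as the special case of Theorem \ref{Main} in which every descending block has the same length. First I would set $i_1=i_2=\cdots=i_k=j$ in the up/down pattern $D^{i_1}UD^{i_2}U\cdots UD^{i_k}$ and read off $k$ from the length of the permutation: the pattern $(D^{j}U)^{k-1}D^{j}$ has $jk$ letters $D$ and $k-1$ letters $U$, hence length $(j+1)k-1$, while a permutation of $\{1,\ldots,(j+1)m\}$ has an up/down pattern of length $(j+1)m-1$. This forces $k=m$. So the permutations in question are precisely the decent $312$-avoiding permutations of $\{1,\ldots,(j+1)m\}$ with exactly $m$ descending blocks, each of length $j+1$.

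Next I would trace the effect of the dissection map (the procedure preceding Theorem \ref{Main}) on the cell sizes. In that construction each maximal descending run of the clip sequence contributes a single cell: the $i_t$ triangles clipped along the run after its peak assemble into an $(i_t+2)$-gon, while the run-peaks are absorbed as the gaps are closed. Specializing to $i_t=j$ for all $t$, every cell produced is a $(j+2)$-gon, so the image of the map consists exactly of those dissections all of whose cells have $j+2$ sides, i.e. the $(j+2)$-angulations. Since Theorem \ref{Main} and the construction preceding it already realize a bijection that records this cell-size profile, restricting it to the present pattern yields a bijection onto the $(j+2)$-angulations.

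Finally I would pin down the ambient polygon by a cell count. The $m$ descending blocks produce $m$ cells, so $f=m$. A dissection of an $N$-gon into $f$ cells uses $f-1$ diagonals, and counting edge incidences over the cells gives $(j+2)f=N+2(f-1)$, whence $N=jf+2$. With $f=m$ this yields $N=mj+2$, so the dissections in the image are exactly the $(j+2)$-angulations of an $(mj+2)$-gon. For the converse direction I would observe that any such angulation has $f=m$ cells, hence arises from a decent permutation with $m$ equal-length blocks and therefore with up/down pattern $D^{j}UD^{j}U\cdots UD^{j}$, closing the bijection.

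I expect the main obstacle to be the middle step: verifying cleanly that under the correspondence of Theorem \ref{Main} the multiset of cell sizes of a dissection equals $\{i_1+2,\ldots,i_k+2\}$, so that the constraint ``all $i_t=j$'' translates exactly into ``all cells are $(j+2)$-gons.'' This amounts to confirming that distinct descending blocks never merge into a common cell and that no cell is split across two blocks, which is where the bookkeeping of the clipping and gap-closing operations must be made precise. Once that is established, both the forward image and the converse follow immediately from the arithmetic above.
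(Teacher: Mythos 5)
Your proposal is correct, and it follows essentially the route the paper intends: the paper gives no separate argument for this corollary beyond the remark that the corollaries ``follow easily from Theorem \ref{Main},'' and your proof is exactly that specialization (set $i_1=\cdots=i_k=j$, deduce $k=m$ from the pattern length) combined with reading the cell sizes off the construction preceding the theorem. One point deserves emphasis, though: your edge-incidence count $(j+2)f = N + 2(f-1)$, giving $N = jf+2$, does \emph{not} agree with the literal statement of Theorem \ref{Main}, which asserts a bijection with dissections of an $(n-k+1)$-gon by $k$ diagonals; plugging $n=(j+1)m$ and $k=m$ into that statement would give a $(jm+1)$-gon by $m$ diagonals, not the $(mj+2)$-gon by $m-1$ diagonals that the corollary (and your count) require. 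The construction itself, and the paper's first corollary (alternating permutations of $\{1,\ldots,2n\}$ correspond to triangulations of an $(n+2)$-gon, i.e., $n$ cells and $n-1$ diagonals), support your numbers: a decent permutation of $\{1,\ldots,n\}$ with $k$ descending blocks yields $k$ cells of sizes $i_t+2$, hence a dissection of an $(n-k+2)$-gon by $k-1$ diagonals. So your arithmetic not only fills in what the paper leaves implicit, it quietly corrects an off-by-one in the theorem statement being specialized. Finally, the ``main obstacle'' you flag --- that descending blocks and cells correspond one-to-one, with no merging or splitting --- is precisely the content of the construction preceding Theorem \ref{Main}, which the paper asserts rather than proves in detail, so relying on it puts your proof at the same level of rigor as the paper's.
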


\begin{figure}[H]
\begin{center}
\epsfxsize=7in
\leavevmode\epsffile{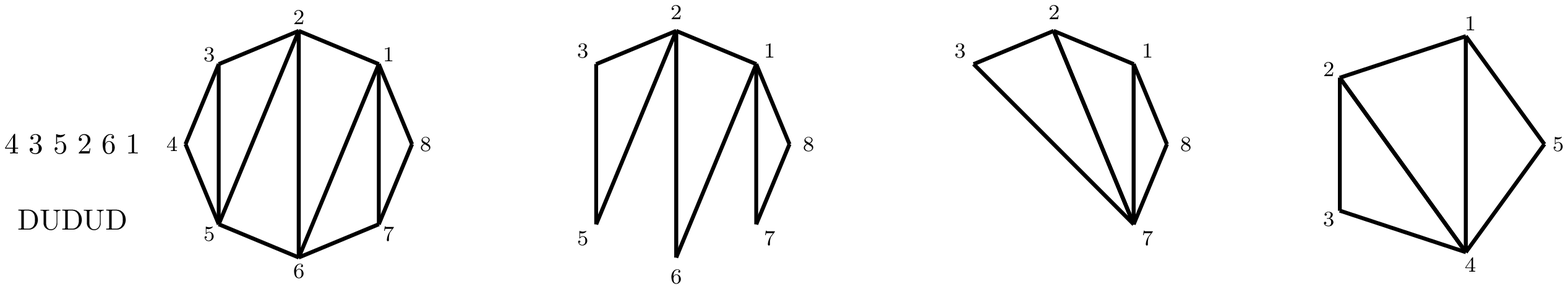}
\end{center}
\caption{}\label{CS435261}
\end{figure}

\begin{figure}[H]
\begin{center}
\epsfxsize=7in
\leavevmode\epsffile{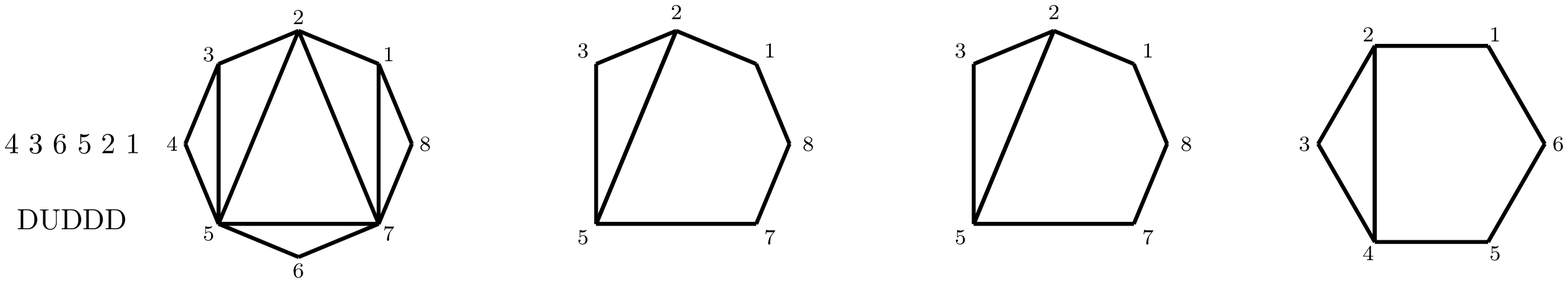}
\end{center}
\caption{}\label{CS436521}
\end{figure}

\begin{figure}[H]
\begin{center}
\epsfxsize=7in
\leavevmode\epsffile{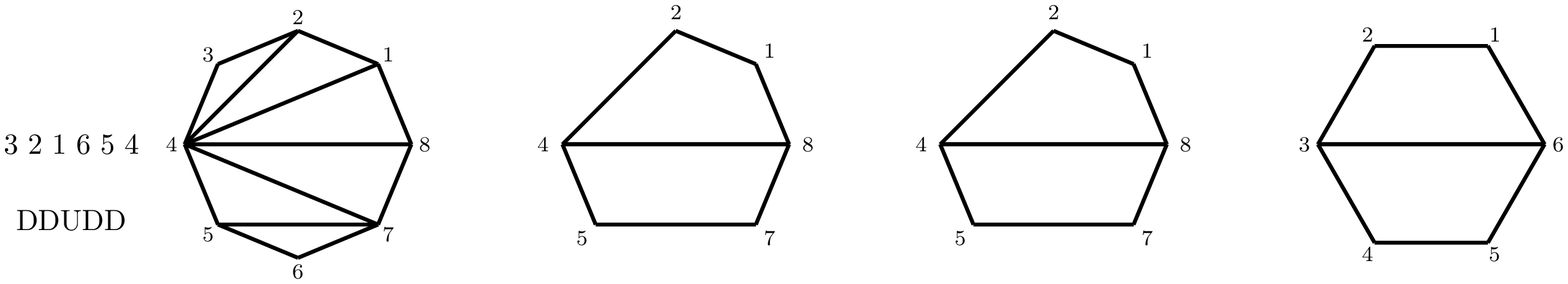}
\end{center}
\caption{}\label{CS543621}
\end{figure}

\begin{figure}[H]
\begin{center}
\epsfxsize=7in
\leavevmode\epsffile{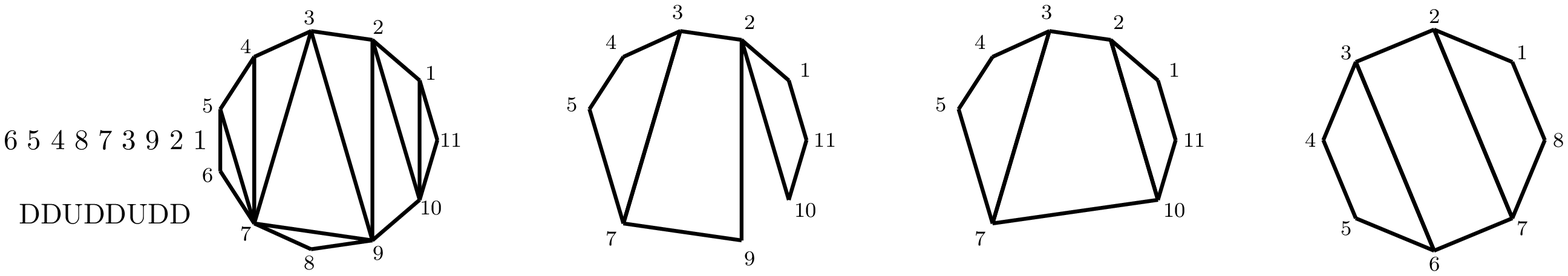}
\end{center}
\caption{}\label{CS654873921}
\end{figure}

%\begin{figure}[H]
%\begin{center}
%\epsfxsize=7in
%\leavevmode\epsffile{CS654321.eps}
%\end{center}
%\caption{}
%\label{CS654321}
%\end{figure}

\section{Acknowledgement}
The author thanks Dennis Stanton and Dennis White, who pointed out the alternative interpretation of the bijection of Theorem \ref{312clip}.

\end{document}